\newtheorem{theorem}{Theorem}[section]
\newtheorem{remark}{Remark}[section]
\newtheorem{corollary}[theorem]{Corollary}
\newtheorem{lemma}[theorem]{Lemma}
\newtheorem{definition}[theorem]{Definition}
\newtheorem*{definition*}{Definition}
\def\RR{\mathcal{R}}
\def\supp{\hbox{supp\,}}
\def\RR{\mathbb{R}}
\def\supp{\text{supp}}
\newcommand{\Comment}[1]{}
\newcommand{\cbr}[1]{\left\{ {#1} \right\}}
\theoremstyle{remark}
\begin{document}
\title{Falconer type functions in three variables}
\author{Doowon Koh\and Thang Pham\and Chun-Yen Shen}
\date{}
\maketitle

\begin{abstract}
Let $f\in \mathbb{R}[x, y, z]$ be a quadratic polynomial that depends on each variable and that does not have the form $g(h(x)+k(y)+l(z))$. Let $A, B, C$ be compact sets in $\mathbb{R}$. Suppose that $\dim_H(A)+\dim_H(B)+\dim_H(C)>2$, then we prove that the image set $f(A, B, C)$ is of positive Lebesgue measure. Our proof is based on a result due to Eswarathasan, Iosevich, and Taylor (Advances in Mathematics, 2011), and a combinatorial argument from the finite field model.
\end{abstract}

\section{Introduction}

Let $\mathbb{F}_q$ be an arbitrary finite field of order $q$, where $q$ is a prime power. A function $f\colon \mathbb{F}_q^l\to \mathbb{F}_q$ is called a moderate expander with the exponent $\epsilon$ if for all $A\subset \mathbb{F}_q$ with $|A| > q^{1-\epsilon}$, we have $|f(A, A, \ldots, A)|\gg q$, where we write $X\ll Y$ if $X\le C Y$ for some constant $C>0$ not depending on $q$.

Since 2005, there is a series of papers on seeking moderate expanders with the biggest exponents. When $l=2$, a remarkable result of Tao in \cite{tao} says that any polynomial in two variables which is not of the form $g(h(x)+k(y))$ or $g(h(x)k(y))$, for some polynomials $g, h, k$, is a moderate expander over $\mathbb{F}_q$ with the exponent $\frac{1}{16}$. When $l>2$, we know that the following polynomials are expanders with the exponent $\frac{1}{3}$: $x+yz$ \cite{shpas}, $x+(y-z)^2$ and  $x(y+z)$ \cite{vinh}, $(x-y)^2+(z-t)^2$ \cite{chap}, $xy+zt$ \cite{ha2}, $xy+z+t$ \cite{sak}. For more variables polynomials, it is expectable to have bigger exponents. For instance, $(x-y)(z-t)$ with $\epsilon=\frac{1}{3}+\frac{1}{13542}$ in \cite{mu}, $xy+(z-t)^2$ with $\epsilon=\frac{1}{3}+\frac{1}{24}$ in \cite{vinh}, many other examples can also be found in \cite{KMPV, VAN}. 

When $f$ is the algebraic distance function, namely, $f(x, y, z, t)=(x-y)^2+(z-t)^2$. A result of Iosevich and Rudnev \cite{io} on the Erd\H{o}s-Falconer distance problem tells us that for any $A\subset \mathbb{F}_q$, if $|A|\ge 2q^{1-\frac{1}{4}}$, then $f(A, A, A, A)=\mathbb{F}_q$. The exponent $\frac{1}{4}$ has been improved to $\frac{1}{3}$ by Chapman et al. \cite{chap}, and the best current exponent is $\frac{3}{8}$ over prime fields 
which follows from a recent result due to Murphy et al. \cite{mup1}. 

In the setting of prime fields, a very general result on moderate expanders in three variables has been proved by the second listed author, Vinh, and De Zeeuw in \cite{TPHAM}. More precisely, for any quadratic polynomial that depends on each variable and that does not have the form $g(h(x)+k(y)+l(z))$, and sets $A, B, C\subset \mathbb{F}_p$ with $|A|=|B|=|C|=N$, we have 
\[|f(A, B, C)\gg \min \left\lbrace  N^{3/2}, p\right\rbrace.\]
In other words any such a quadratic polynomial is a moderate expander of the exponent $\frac{1}{3}$. 

Corresponding to moderate expanders over finite fields, we have the following definition of  \textit{Falconer type functions. }
\begin{definition}
Let $\Phi\colon \mathbb{R}^l\to \mathbb{R}$, we say $\Phi$ is a Falconer type function with the threshold $\epsilon$ if for any compact set $A\subset \mathbb{R}$ of Hausdorff dimension at least $1-\epsilon$, then the image set $\Phi(A, \ldots, A)$ is of positive Lebesgue measure. 
\end{definition}

This definition is inspired by the Falconer distance conjecture, which says that for any compact set $E\subset \mathbb{R}^d$, if the Hausdorff dimension of $E$ is greater than $\frac{d}{2}$, then the Lebesgue measure of the distance set $\Delta(E)$ is positive. It is still widely open in any dimensions and the best current thresholds are $\frac{d}{2}+\frac{1}{4}$ in even dimensions due to Du, Iosevich, Ou, Wang, Zhang in \cite{DIOZ},  and $\frac{d}{2}+\frac{d}{4d-2}$ in odd dimensions by Du and Zhang in \cite{Du2}. Several extensions of this problem have been studied in the literature, for instance, see \cite{td-1, td0, td1, td2} and references therein. 

The main purpose of this paper is to provide a general family of Falconer type functions in three variables and our family of functions is in general optimal. Our proofs are based on a result due to Eswarathasan, Iosevich, and Taylor in \cite{iosevich}, and a combinatorial argument from the finite field model.
It will be seen through our approach that there is a close connection between the finite field model and the continuous setting. In the rest of this paper, we write $\dim_H(X)$ for the Hausdorff dimension of $X$, and $|X|$ for the Lebesgue measure of $X$. 

The following is our main result. 

\begin{theorem}\label{quadratic}
Let $f\in \mathbb{R}[x, y, z]$ be a quadratic polynomial that depends on each variable and that does not have the form $g(h(x)+k(y)+l(z))$. For compact sets $A, B, C\subset \mathbb{R}$ if $\dim_H(A)+\dim_H(B)+\dim_H(C)>2$, then $|f(A, B, C)|>0$. 
\end{theorem}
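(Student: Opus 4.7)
The plan is to reduce the three-variable Falconer problem to a two-variable one by freezing the third variable, and then to apply the theorem of Eswarathasan, Iosevich, and Taylor (EIT) to the resulting slice. A key preliminary observation is dimensional arithmetic: because each of $A, B, C\subset\mathbb{R}$ has Hausdorff dimension at most $1$, the hypothesis $\dim_H(A)+\dim_H(B)+\dim_H(C)>2$ forces every pairwise sum of dimensions to exceed $1$, and in particular forces the smallest of the three dimensions to be strictly positive (so each of $A, B, C$ is uncountable). Thus, whichever variable one chooses to freeze, the two remaining sets have dimension sum strictly above the two-variable Falconer threshold, and the frozen variable still ranges over a set rich enough to evade a finite exceptional locus.

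Next I would analyze the algebraic structure of $f$ to pick the slicing variable. Because $f$ is quadratic, each cross-term coefficient $a_{xy}, a_{xz}, a_{yz}$ is a constant; if all three vanished, then $f=h(x)+k(y)+l(z)+\mathrm{const}$, an instance of the excluded form. After relabeling I may assume $a_{xy}\ne 0$. For each $c\in C$ define $f_c(x,y):=f(x,y,c)$, a two-variable quadratic with $\partial_x\partial_y f_c=a_{xy}\ne 0$. I would then apply the two-variable EIT result to $f_c$ together with Frostman measures on $A$ and $B$ of total exponent strictly greater than $1$: for all but finitely many $c$, their theorem yields an $L^2$-density pushforward, and hence $|f_c(A,B)|>0$. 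Because $C$ is uncountable, one can pick such a good $c\in C$, and then $|f(A,B,C)|\ge |f_c(A,B)|>0$.

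The main obstacle is justifying that the set of ``bad'' $c$ --- those for which $f_c$ fails the Phong-Stein-type non-degeneracy required by EIT --- is indeed small. This is where the ``combinatorial argument from the finite field model'' enters: one must verify, via a case analysis paralleling the classification used by Pham-Vinh-De Zeeuw, that the slices $f_c$ are not of the two-variable excluded forms $g(h(x)+k(y))$ or $g(h(x)k(y))$ except for finitely many $c$. The delicate subcase is when the quadratic part of $f$ is itself a perfect square in $x,y$, so that every $f_c$ exhibits degenerate rotational curvature; here I would instead make the linear change of variables $u=\alpha x+\beta y$ and apply a two-variable EIT to $g(u,z):=u^2+B(z)u+C(z)$ on $U\times C$, where $U=\alpha A+\beta B$. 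The assumption that $f$ is not of the form $g(h(x)+k(y)+l(z))$ translates to $C(z)\not\equiv B(z)^2/4+\mathrm{const}$, which supplies the required non-degeneracy of $g$; meanwhile the pushforward of $\mu_A\otimes\mu_B$ to $U$ remains Frostman of exponent $\max(\dim_H(A),\dim_H(B))$, and by the dimensional arithmetic of the first step, this exponent plus $\dim_H(C)$ is strictly greater than $1$, closing the argument.
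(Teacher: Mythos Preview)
Your slicing strategy has a genuine gap: there is no two-variable Eswarathasan--Iosevich--Taylor statement with threshold $1$. When $\Psi:\mathbb{R}^d\times\mathbb{R}^d\to\mathbb{R}$ satisfies the Phong--Stein rotational curvature condition, the generalized Radon transform $T_{\Psi_t}$ gains $s=\frac{d-1}{2}$ derivatives, so Proposition~\ref{2.1.1} requires the Frostman exponent to exceed $d-s=\frac{d+1}{2}$. For $d=2$ that threshold is $\tfrac{3}{2}$, not $1$. Your hypothesis only guarantees pairwise sums $\dim_H(A)+\dim_H(B)>1$, which is not enough. A concrete counterexample: take $f(x,y,z)=xy+z$ (a polynomial covered by the theorem) and let $A=B=\exp(S)$ where $S\subset[0,1]$ is a Schmeling--Shmerkin set with $\dim_H(S)=0.6$ and $\dim_H(S+S)<1$. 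Then $\dim_H(A)+\dim_H(B)=1.2>1$, yet $|AB|=0$, so $|f_c(A,B)|=|AB+c|=0$ for \emph{every} $c\in C$. No choice of $c$ rescues the slice, and no finite bad set is being avoided; the three variables genuinely interact. The same objection applies to your ``delicate subcase'': the function $g(u,z)$ on $U\times C$ with Frostman exponents summing just above $1$ is still below the $d=2$ threshold $3/2$.

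The paper's proof does \emph{not} freeze a variable. Instead it works directly in $d=3$: it builds bi-Lipschitz (up to bounded multiplicity) maps $F_1,F_2:A\times B\times C\to\mathbb{R}^3$ and a function $\Psi(\mathbf{u},\mathbf{v})=u_1v_1-u_2v_2+u_3-v_3$ so that $f(x,y,z)-f(x',y',z')=\Psi(F_1(x,y',z\text{ or }z'),F_2(\cdot))$. One checks $\Psi$ satisfies Phong--Stein in $\mathbb{R}^3\times\mathbb{R}^3$, whence $s=1$ and the dimensional threshold becomes exactly $3-1=2$, matching the hypothesis $\dim_H(A)+\dim_H(B)+\dim_H(C)>2$. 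The combinatorial case analysis (borrowed from the finite-field paper of Pham--Vinh--De~Zeeuw) is used to design $F_1,F_2$ and to excise a lower-dimensional bad set $S$ on which the preimage degeneracy is uncontrolled, not to handle slices.
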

\begin{corollary}\label{thm1}
Let $A, B, C$ be compact sets in $\mathbb{R}$. Suppose that $\dim_H(A)+\dim_H(B)+\dim_H(C)>2$, then we have 
\begin{enumerate}
\item  $|(A-B)^2+C|>0$.
\item $|AB+C|>0$.
\item $|A(B+C)|>0$.
\end{enumerate}
\end{corollary}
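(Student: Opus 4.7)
The plan is to deduce the three parts as direct applications of Theorem~\ref{quadratic}. For each of the expressions I introduce the corresponding polynomial in three variables: $f_1(x,y,z)=(x-y)^2+z$ for (1), $f_2(x,y,z)=xy+z$ for (2), and $f_3(x,y,z)=x(y+z)=xy+xz$ for (3). Each is a quadratic polynomial that visibly depends on each of $x,y,z$, so the only nontrivial hypothesis to verify in each case is that the polynomial does not have the form $g(h(x)+k(y)+l(z))$ for some one-variable polynomials $g,h,k,l$.

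For this verification I would argue by a degree count. Suppose $f(x,y,z)=g(h(x)+k(y)+l(z))$ has total degree $2$. Then either $\deg g=1$, in which case $f$ is an affine combination of $h(x),k(y),l(z)$ and so contains no mixed monomials $xy,xz,yz$, or $\deg g=2$ with $h,k,l$ all affine. The first option fails immediately for (1)--(3), since each of $f_1,f_2,f_3$ contains a cross term. In the second case, write $h(x)+k(y)+l(z)=\alpha x+\beta y+\gamma z+C$ and $g(t)=at^2+bt+c$, and read off monomial coefficients. For $f_1$, the $z^2$ coefficient $a\gamma^2$ must vanish while the $z$ coefficient $\gamma(2aC+b)$ must equal $1$, forcing $\gamma=0$ and $\gamma\neq 0$ simultaneously. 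For $f_2$, the $xy$ coefficient $2a\alpha\beta=1$ gives $a\alpha\neq 0$, so the vanishing of the $xz$ coefficient $2a\alpha\gamma$ forces $\gamma=0$, contradicting the $z$ coefficient being $1$. For $f_3$, the vanishing of the $x^2$ coefficient $a\alpha^2$ forces $\alpha=0$, which is incompatible with the $xy$ coefficient $2a\alpha\beta=1$. Each case yields a contradiction.

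Once the hypotheses of Theorem~\ref{quadratic} are verified for each $f_i$, the assumption $\dim_H(A)+\dim_H(B)+\dim_H(C)>2$ immediately yields $|f_i(A,B,C)|>0$, which is precisely the $i$-th claim of the corollary. I do not anticipate any real obstacle: the corollary is essentially a dictionary unpacking Theorem~\ref{quadratic}, and the degree-count verification above is routine algebra. The genuine difficulty of the paper lies in establishing Theorem~\ref{quadratic} itself.
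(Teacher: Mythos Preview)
Your proposal is correct but takes a different route from the paper. The paper does \emph{not} deduce Corollary~\ref{thm1} from Theorem~\ref{quadratic}; it does the opposite. Section~2 gives a direct, self-contained proof of the three cases using the Eswarathasan--Iosevich--Taylor $L^2$ bound (Theorem~\ref{2.1.1}) together with the Phong--Stein rotational curvature condition, building for each $\Phi$ an explicit $\Psi(\mathbf{u},\mathbf{v})$ on $\mathbb{R}^3\times\mathbb{R}^3$ and checking the Monge--Amp\`ere determinant by hand (with case~(3) requiring an additional bi-Lipschitz change of variables). This section is expressly written as a warm-up to motivate the machinery used in the proof of the general Theorem~\ref{quadratic} in Section~3.

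Your approach---verify that each $f_i$ satisfies the hypotheses of Theorem~\ref{quadratic} and invoke it---is logically sound, since the proof of Theorem~\ref{quadratic} in Section~3 (via Lemmas~\ref{xqq91} and the subsequent lemma) is independent of Corollary~\ref{thm1}. Your degree-count argument ruling out the form $g(h(x)+k(y)+l(z))$ is correct in all three cases. What you gain is brevity: the corollary becomes a two-line application. What the paper's direct argument buys is expository: it exhibits the mechanism (the choice of $\Psi$, the curvature computation, the handling of the pushforward measures in case~(3)) in concrete, low-complexity examples before the reader has to absorb the general case with its partitioning into $X_i,Y_i$ and the removal of the exceptional set $S$.
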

We note that case (3) in Corollary \ref{thm1} was first proved by Liu in \cite{liu2} through the Mattlia integrals and using group actions. It is also not hard to see that the dimensional lower bound in Theorem \ref{quadratic} is in general sharp. For instance, we can take $f(x, y, z)=xy+z$. Let $C$ be a set in $[0, 1]$ that has Hausdorff dimension $1$, but $|C|=0$ (for example, take $C$ to be like a Cantor set), and $A=\{0\}, B=[0,1]$. Then, we have $\dim_H(A)+\dim_H(B)+\dim_H(C)=2$. and  $f(A, B, C)=C$. Hence, $|f(A, B, C)|=0$.  

The exclusion of the form of $g(h(x)+k(y)+l(z))$ is natural and necessary.  A result of Schmeling and Shmerkin \cite{SS} says that for for any real numbers $\alpha_1, \alpha_2, \alpha_3$ such that $0 \leq \alpha_1 \leq \alpha_2 \leq \alpha_3 \leq 1$, one can explicitly construct a compact set $A \subset [0, 1]$ such that  $\dim_H(A)= \alpha_1$, $\dim_H(A+A)=\alpha_2$ and $\dim_H(A+A+A)=\alpha_3$. Therefore, for $f(x, y, z)=(x+y+z)^2$ or $f(x, y, z)=x^2+y^2+z^2$, with $\alpha_1=0.7$ and $0 < \alpha_1 < \alpha_3 < 1$, we have a set $A\subset [0, 1]$ with $\dim_H(A)+\dim_H(A)+\dim_H(A)>2$ such that $|f(A, A, A)|=0$. 

When $l\ge 4$ is even, it follows from a result of Eswarathasan, Iosevich, and Taylor  in \cite[Theorem 1.8]{iosevich} that for any compact set $A\subset \mathbb{R}$ with $\dim_H(A)>\frac{l+2}{2l}$ and any smooth function $\Phi$ satisfying the Phong-Stein curvature condition, we have $|\Phi(A, \ldots, A)|>0$. Due to the generality of the function $\Phi$, this result in general does not offer the best threshold. 
For example, let $l=4$, and let $\Phi(x, y, z, t)=(x-z)^2+y\cdot t$, then we would require the condition $\dim_H(A)>3/4$.  However, if one follows our proof of Corollary \ref{thm1} in the next section, then it is not hard to see that we only need the condition $\dim_H(A)>5/8$, which is directly in the line with L. Vinh's theorem in \cite{vinh} over arbitrary finite fields. The same result also holds for the function $\Phi(x, y, z, t)=(x-z)^2+(y-t)^2$, which is a  consequence of the recent result on the Falconer distance problem in  \cite{guth, DIOZ}. A more general form of $\Phi$ in four variables can also be derived from the recent work \cite{iosevich3}. In addition, one also can discuss some related and stronger results about non-empty interiors that can be found in the papers \cite{cite1} and \cite{cite2}. 

Let $A$ be a compact set in $\mathbb{R}$, it has been proved by Iosevich and Liu in \cite{Alexliu} that if $\dim_H(A)>\frac{3}{5}$, then the distance set of $E=A^3$ is of positive Lebesgue measure. As an application of Theorem \ref{quadratic}, we have the following improvement. 
\begin{corollary}\label{distance}
Let $A$ be a compact set in $\mathbb{R}$ and $E=A\times A\times A\subset \mathbb{R}^3$. If $\dim_H(A)>\frac{4}{7}$, then we have $|\Delta(E)|>0.$ 
\end{corollary}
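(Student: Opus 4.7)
The plan is to decompose the squared distance set of $E$ into an image to which Theorem \ref{quadratic} applies, and then verify the dimension budget using a planar distance-set estimate. Let $V := \Delta^2(A \times A) \subset [0,\infty)$ denote the set of squared distances between pairs of points of the compact planar product $A \times A \subset \mathbb{R}^2$; then $V$ is compact. Setting $f(x,y,z) := (x-y)^2 + z$, one has the tautological identity
\[
\Delta^2(E) = \{(a_1-b_1)^2 + (a_2-b_2)^2 + (a_3-b_3)^2 : a_i, b_i \in A\} = f(A, A, V).
\]
The polynomial $f$ is quadratic and depends on each variable. To see that $f$ is not of the form $g(h(x)+k(y)+l(z))$, note that such a representation would force $\partial_x\partial_z f = g''(H)h'(x)l'(z) = 0$, and hence one of $g$ linear, $h$ constant, or $l$ constant; but $g$ linear would make $f$ additively separable and so $\partial_x\partial_y f = 0$, contradicting $\partial_x\partial_y f = -2$, while $h$ or $l$ constant would contradict the dependence of $f$ on $x$ or $z$.

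The content of the proof is therefore to secure the dimension inequality $\dim_H(A) + \dim_H(A) + \dim_H(V) > 2$. For this I would invoke the Wolff-type Hausdorff-dimension bound for the planar distance set: for any compact $F \subset \mathbb{R}^2$ with $\dim_H(F) \geq 1$, one has $\dim_H(\Delta(F)) \geq \tfrac{4}{3}\dim_H(F) - \tfrac{2}{3}$. Taking $F = A \times A$ and combining with Marstrand's product inequality $\dim_H(A \times A) \geq 2\dim_H(A)$ yields
\[
\dim_H(V) \geq \tfrac{8}{3}\dim_H(A) - \tfrac{2}{3} > \tfrac{8}{3}\cdot\tfrac{4}{7} - \tfrac{2}{3} = \tfrac{6}{7},
\]
so $\dim_H(A) + \dim_H(A) + \dim_H(V) > \tfrac{4}{7} + \tfrac{4}{7} + \tfrac{6}{7} = 2$. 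Theorem \ref{quadratic} then gives $|f(A, A, V)| = |\Delta^2(E)| > 0$, which upgrades to $|\Delta(E)| > 0$ since $t \mapsto t^2$ is locally bi-Lipschitz on $(0,\infty)$.

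The main obstacle is the Wolff-type lower bound on $\dim_H(\Delta(A \times A))$: the exponent $4/7$ is exactly calibrated to the slope $\tfrac{4}{3}$ of that inequality, so any improvement in the planar distance-set bound would propagate immediately to a better exponent in Corollary \ref{distance}, while a weaker planar bound would break the argument. The remaining ingredients --- compactness of $V$, the identity $\Delta^2(E) = f(A, A, V)$, and the algebraic check on $f$ --- are all routine.
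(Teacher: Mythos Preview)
Your proof is correct and mirrors the paper's own argument: both apply the case $f(x,y,z)=(x-y)^2+z$ of the main theorem with third input $V=\Delta(A\times A)^2$, and both close the dimension budget via Liu's planar bound $\dim_H(\Delta(F))\ge\tfrac{4}{3}\dim_H(F)-\tfrac{2}{3}$ (your ``Wolff-type'' estimate is exactly the paper's Lemma~\ref{lm2'}). The one step you leave implicit is that $\dim_H(V)=\dim_H\bigl(\Delta(A\times A)\bigr)$, which the paper records separately but which indeed follows from your observation that $t\mapsto t^2$ is locally bi-Lipschitz on $(0,\infty)$.
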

We note that when $E=A^d$, if we wish the distance set has non-empty interior, then from the recent work \cite{KPS}, the following condition would be enough
$$ \dim_H(A) > \left\{ \begin{array}{ll} \frac{d+1}{2d} \quad &\mbox{if}~~ 2\le d\le 4\\
\frac{d+1}{2d}-\frac{d-4}{2d(3d-4)}\quad &\mbox{if}~~ 5\le d\le 26\\
\frac{d+1}{2d}-\frac{23d-228}{114d(d-4)}\quad &\mbox{if}~~ 27\le d. \end{array} \right.$$
We also remark that, in the spirit of Theorem \ref{quadratic}, it would be very interesting to study the Hausdorff dimension of the set $f(A, B, C)$ for any quadratic polynomial that does not have the form $g(h(x)+k(y)+l(z))$. It is worth mentioning a result in this direction by Bourgain in \cite{bg1, bg2}, which states that for any $A, B\subset \mathbb{R}$ with $\dim_H(A)=\dim_H(B)\in (0, 1)$, and $C\subset \mathbb{R}$ with $\dim_H(C)\ge k>0$, then $\dim_H(A+\theta B)\ge \epsilon$, for some $\theta\in C$, where $\epsilon>0$ depends only on $\dim_H(A)$ and $\dim_H(B)$. The recent progress on this problem can be found in \cite{o} by Orponen. We hope to address this question in a subsequent paper.

\section{Some special cases of Theorem \ref{quadratic}}
In this section, we provide a proof of Corollary \ref{thm1}, which will provide some intiuitive ideas behind the proof of Theorem \ref{quadratic}.

Given $f\colon \mathbb{R}^d\to \mathbb{R}$ and $t\in \mathbb{R}$, define 
\[T_{\Psi_t}f(\mathbf{x}):=\int_{\{\Psi(\mathbf{x}, \mathbf{y})=t\}}f(\mathbf{y})\psi(\mathbf{x}, \mathbf{y})d\sigma_{\mathbf{x}, t}(\mathbf{y}),\]
where $d\sigma_{\mathbf{x}, t}$ is the Lebesgue measure on the set $\{\mathbf{y}\colon \Psi(\mathbf{x}, \mathbf{y})=t\}$ and $\psi$ is a smooth cut-off function and $ \Psi(\mathbf{x}, \mathbf{y}) : \mathbb R^d \times \mathbb R^d \rightarrow \mathbb R$ are smooth functions with some suitable assumptions. Let $L^2_s(\mathbb{R}^d)$ denotes the usual $L^2$-Sobolev space of $L^2$ functions with $s$ generalized derivatives in $L^2(\mathbb{R}^d)$.
To prove Theorem \ref{thm1}, let us first recall the following result by Eswarathasan, Iosevich and Taylor \cite{iosevich}.
\begin{theorem} [Proposition 2.2 \cite{iosevich}]\label{2.1.1}
Let $\mu$ be a probability measure on a compact set $E \subset \mathbb R^d$ and assume $T_{\Psi_t}$ maps $L^2$ to $L^2_s$ with $ d-s < \alpha < d$ with constants uniform in a small neighborhood of $t$, where $\alpha=\dim E$.

Then
$$\mu \times \mu \{(\mathbf{x}, \mathbf{y}) \in E \times E : \epsilon \leq |\Psi(\mathbf{x},\mathbf{y})| \leq t+ \epsilon\} \lesssim \epsilon.$$

\end{theorem}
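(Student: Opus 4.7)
The plan is to follow the standard Mattila--Falconer scheme: use a coarea-type decomposition to write the slab measure as a one-parameter integral of the $L^2$ pairing $\langle T_{\Psi_s}\mu, \mu\rangle$, and then invoke the smoothing hypothesis together with a Frostman energy bound to get a uniform-in-$s$ estimate on that pairing.

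First I would mollify $\mu$ by $\mu^\delta = \mu * \phi_\delta$ and introduce a smooth bump $\eta_\epsilon$ supported in $[t-\epsilon,t+\epsilon]$, so that bounding the mollified quantity $\int \eta_\epsilon(\Psi(\mathbf{x},\mathbf{y}))\,\mu^\delta(\mathbf{x})\mu^\delta(\mathbf{y})\,d\mathbf{x}\,d\mathbf{y}$ suffices, with monotone convergence recovering the statement for $\mu$ at the end. Because $T_{\Psi_s}$ is already defined as integration against $d\sigma_{\mathbf{x},s}$ on $\{\Psi(\mathbf{x},\mathbf{y})=s\}$, the coarea formula identifies this smoothed mass with
$$\int_{|s-t|\lesssim \epsilon}\langle T_{\Psi_s}\mu^\delta, \mu^\delta\rangle\, ds.$$
Thus the problem reduces to showing $\langle T_{\Psi_s}\mu^\delta, \mu^\delta\rangle = O(1)$ uniformly in $s$ near $t$ and in $\delta$, after which the factor $\epsilon$ from the length of the interval gives the claimed bound.

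For the uniform estimate on the pairing I would split the smoothing symmetrically. Writing $\Lambda = (I-\Delta)^{1/2}$, the hypothesis $T_{\Psi_s}:L^2\to L^2_s$ (uniformly in $s$) combined with the fact that the generalized Radon/Fourier integral operators of the relevant type preserve their order under conjugation by Bessel potentials yields $T_{\Psi_s}:L^2_{-s/2}\to L^2_{s/2}$ with uniform constants. Cauchy--Schwarz then gives
$$\langle T_{\Psi_s}\mu^\delta, \mu^\delta\rangle \le \|T_{\Psi_s}\mu^\delta\|_{L^2_{s/2}}\,\|\mu^\delta\|_{L^2_{-s/2}} \lesssim \|\mu^\delta\|_{L^2_{-s/2}}^2.$$
The remaining Sobolev norm is controlled by a standard Frostman energy computation: since $\alpha=\dim E > d-s$, I can pick $\alpha'\in(d-s,\alpha)$ and a Frostman measure with $\mu(B(\mathbf{x},r))\lesssim r^{\alpha'}$, which yields
$$\|\mu^\delta\|_{L^2_{-s/2}}^2 \lesssim \int |\widehat{\mu}(\xi)|^2(1+|\xi|)^{-s}\,d\xi \lesssim I_{d-s}(\mu)+1 \lesssim 1,$$
uniformly in $\delta$, because $d-s<\alpha'$.

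The main obstacle will be rigorously justifying the symmetrization step, i.e.\ passing from $T_{\Psi_s}:L^2\to L^2_s$ to $T_{\Psi_s}:L^2_{-s/2}\to L^2_{s/2}$: this is not formal for an arbitrary bounded operator, but it is standard for the generalized Radon transforms satisfying the Phong--Stein rotational curvature condition implicit in the setup, because such operators commute with $\Lambda^\sigma$ modulo lower-order terms. The uniformity in $s$ in a small neighborhood of $t$ is already part of the hypothesis, so once that microlocal input is in hand the rest is routine.
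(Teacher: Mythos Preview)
The paper does not prove this statement: Theorem~\ref{2.1.1} is quoted verbatim as Proposition~2.2 of \cite{iosevich} and is used as a black box throughout. There is therefore no ``paper's own proof'' to compare against.

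That said, your sketch is essentially the argument given in the cited reference \cite{iosevich}. The scheme --- mollify, apply the coarea formula to reduce to a one-parameter family of pairings $\langle T_{\Psi_s}\mu^\delta,\mu^\delta\rangle$, then bound each pairing by a Sobolev energy of $\mu$ --- is exactly what Eswarathasan, Iosevich, and Taylor do. You have also correctly isolated the one genuinely nontrivial point: the hypothesis $T:L^2\to L^2_s$ alone does \emph{not} give $\langle T\mu,\mu\rangle\lesssim \|\mu\|_{L^2_{-s/2}}^2$ for an arbitrary bounded operator, and one needs the FIO/canonical-graph structure behind the Phong--Stein theorem to pass to $T:L^2_{-s/2}\to L^2_{s/2}$. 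In \cite{iosevich} this is handled by invoking the calculus of Fourier integral operators of order $-s$ associated with a local canonical graph, which is precisely the microlocal input you flag.

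One cosmetic issue: you use the letter $s$ simultaneously for the Sobolev exponent (fixed by the hypothesis) and for the integration variable over level sets of $\Psi$. In the paper the level-set parameter is $t$ and the Sobolev index is $s$; keeping these separate would avoid the appearance of ``$T_{\Psi_s}:L^2\to L^2_s$'' in which the two $s$'s mean different things.
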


\begin{remark}
It has been mentioned or can be directly checked in the paper \cite{iosevich} that the same result holds for $E \times F$, i.e
$$\mu_E \times \mu_F \{(\mathbf{x}, \mathbf{y}) \in E \times F : \epsilon \leq |\Psi(\mathbf{x},\mathbf{y})| \leq t+\epsilon\} \lesssim \epsilon,$$
if  $T_{\Psi_t}$ maps $L^2$ to $L^2_s$ with $ d-s < \alpha, \beta < d$, where $\alpha=\dim E$ and $\beta=\dim F$. 
\end{remark}

To deal with the boundedness of the Radon transforms $T_{\Psi_t}$, the curvature condition can be checked partially by a celebrated result of Phong-Stein \cite{phong} that if a smooth function $\Psi : U \times V \subset \mathbb R^d \times \mathbb R^d \rightarrow \mathbb R$ satisfies the so-called Phong-Stein rotational curvature condition that
$$\det \begin{pmatrix}
0 & \nabla_\mathbf{x}\Psi \\
-\nabla_\mathbf{y}\Psi & \frac{\partial^2 \Psi}{\partial x_i \partial y_j} 
\end{pmatrix} \ne 0,$$ on the set $\{(\mathbf{x}, \mathbf{y})\colon \Psi(\mathbf{x},\mathbf{y})=t\}$, then the operator $T_{\Psi_t}$ is uniformly bounded from $L^2$ to $L^2_s$ on a small neighborhood of $t$ with $s= \frac{d-1}{2}$.

Let $\nu$ be a measure supported on the range of $\Phi$ defined by 
\[\int f(t)\nu(t)=\int f(\Phi(x, y, z))d\mu_A(x)d\mu_B(y)d\mu_C(z),\]
where $\mu_X$ denotes a probability measure on $X$ satisfying the Frostman condition.  Our goal now is to show that the $L^2$ norm of $\nu$ is finite so that it immediately implies that the Lebesgue measure of the support of $\Phi$ is positive.  
We begin with an approximation of  identity for $\nu$ as follows. We choose $\phi \in C_0^{\infty}(\RR)$ with $\phi \geq 0$, $\supp(\phi) \subseteq B(0, 1)$, and $\int \phi(x) dx = 1$, and the associated approximate identity is $\phi_{\epsilon}(x)=\epsilon^{-1}\phi(\epsilon^{-1}x)$ for $\epsilon > 0$. 

Since $\widehat{\phi_{\epsilon} \ast \nu} \to \widehat{\nu}$, it suffices to show that
\begin{align}\label{nu to nug approx 1}
\int_{\RR} (\phi_{\epsilon} \ast \nu)^2(t) dt \lesssim  1.
\end{align}

For $t \in \RR$, we have 
\begin{align*}
\phi_{\epsilon} \ast \nu(t) 
&=  \int_{\RR^3} \phi_{\epsilon}\left(t-\Phi(x, y, z)\right) d\mu_A(x) d\mu_B(y)d\mu_C(z) \\
&\lesssim  \int_{\RR^3} \epsilon^{-1} \chi_{\cbr{\bigg|t-\Phi(x, y, z)\bigg| \leq \epsilon}} d\mu_A(x) d\mu_B(y)d\mu_C(z),
\end{align*}
where $\chi_S$ denotes the indicator function of a set $S$. 
Hence, applying the triangle inequality, 
\begin{gather*}
\int_{\RR} (\phi_{\epsilon} \ast \nu)^2(t) dt 
\\
\lesssim 
\epsilon^{-2} \int
\chi_{\cbr{\left|t-\Phi(x, y, z)\right| \leq \epsilon}}
\chi_{\cbr{\left|t-\Phi(x', y', z')\right| \leq \epsilon}}
d\mu_A(x) d\mu_B(y) d\mu_C(z)  d\mu_A(x') d\mu_B(y') d\mu_C(z') dt 
\\
\leq 
\epsilon^{-2} \int  
\chi_{\cbr{\left|t-\Phi(x, y, z)|\right| \leq \epsilon}}
\chi_{\cbr{\left|\Phi(x, y, z)-\Phi(x', y', z')\right| \leq 2\epsilon}}
d\mu_A(x) d\mu_B(y) d\mu_C(z)  d\mu_A(x') d\mu_B(y') d\mu_C(z') dt \\
\le \epsilon^{-1} \int  \chi_{\cbr{\left|\Phi(x, y, z)-\Phi(x', y', z')\right| \leq 2\epsilon}}
d\mu_A(x) d\mu_B(y) d\mu_C(z)  d\mu_A(x') d\mu_B(y') d\mu_C(z').
\end{gather*}

{\bf Case $1$:} $(A-B)^2+C$, $\Phi(x, y, z)=(x-y)^2+z$.

Set 
\[\Psi(\mathbf{u}, \mathbf{v}):=(u_1-v_1)^2-(u_2-v_2)^2+u_3-v_3\]
being a smooth function from $\mathbb{R}^3\times \mathbb{R}^3\to \mathbb{R}$, and set $U= A \times B \times C$, $V= B \times A \times C$

Set $\mu_U=\mu_A\times \mu_B\times \mu_C$ and $\mu_V=\mu_B\times \mu_A\times \mu_C$. 

With $G(x, y', z, y, x', z')=\chi_{\cbr{\left|\Phi(x, y, z)-\Phi(x', y', z')\right| \leq 2\epsilon}}$, we have
\begin{gather*}
\int_{\RR} (\phi_{\epsilon} \ast \nu)^2(t) dt 
\\
\le \epsilon^{-1} \int  \chi_{\cbr{\left|\Phi(x, y, z)-\Phi(x', y', z')\right| \leq 2\epsilon}}
d\mu_A(x) d\mu_B(y) d\mu_C(z)  d\mu_A(x') d\mu_B(y') d\mu_C(z')\\
=\epsilon^{-1}\int G(x, y', z, y, x', z')d\mu_A(x)d\mu_B(y')d\mu_C(z)d\mu_B(y)d\mu_A(x')d\mu_C(z')\\
= \epsilon^{-1}\int \chi_{\cbr{|\Psi(\mathbf{u}, \mathbf{v})|\le 2\epsilon}}d\mu_U(\mathbf{u})d\mu_V(\mathbf{v})\\
=\epsilon^{-1}\cdot \mu_U\times \mu_V\left\lbrace (\mathbf{u}, \mathbf{v})\colon |\Psi(\mathbf{u}, \mathbf{v})|\le 2\epsilon \right\rbrace.
\end{gather*}

From here we can apply Theorem \ref{2.1.1}. By a direct computation, we have 

\[\det \begin{pmatrix}
0&\nabla_\mathbf{u} \Psi\\
-(\nabla_\mathbf{v}\Psi)^T&\frac{\partial^2\Psi}{\partial u_i\partial v_j}
\end{pmatrix}=\det \begin{pmatrix}
0&2u_1-2v_1&-2u_2+2v_2&1\\
2v_1-2u_1&-2&0&0\\
-2v_2+2u_2&0&2&0\\
1&0&0&0
\end{pmatrix}    =4. \]
Hence, the Phong-Stein condition is satisfied, and we have $s=2$ since $\dim_H(U), \dim_H(V)>2$ for our assumption that $\dim_H(A)+\dim_H(B)+ \dim_H(C)>2$. 

{\bf Case $2$:} $AB+C$, $\Phi(x, y, z)=xy+z$.

The proof is similar to the previous case with 

\[\Psi(\mathbf{u}, \mathbf{v}):=u_1v_1+u_3-u_2v_2-v_3,\]
and set $U= A \times B \times C$, $V= B \times A \times C$.

{\bf Case $3$:} $A(B+C)$, $\Phi(x, y, z)=x(y+z)$.

The proof of this case is quite different compared to the above two cases. More precisely, let\[U=\{(x_1, y_2, x_1\cdot x_3)\colon x_1\in A, y_2\in B, x_3\in C\}, ~V=\{(y_1, x_2, x_2\cdot y_3)\colon y_1\in B, x_2\in A, y_3\in C\}.\] Define 
\[\Psi(\mathbf{u}, \mathbf{v}):=u_1v_1-u_2v_2+v_3-u_3.\]
With $G(x, y', z, x', y, z')=\chi_{\cbr{\left|\Phi(x, y, z)-\Phi(x', y', z')\right| \leq 2\epsilon}}$, we have
\begin{gather*}
\int_{\RR} (\phi_{\epsilon} \ast \nu)^2(t) dt 
\\
\le \epsilon^{-1} \int  \chi_{\cbr{\left|\Phi(x, y, z)-\Phi(x', y', z')\right| \leq 2\epsilon}}
d\mu_A(x) d\mu_B(y) d\mu_C(z)  d\mu_A(x') d\mu_B(y') d\mu_C(z')\\
=\epsilon^{-1}\int G(x, y', z, x', y, z')d\mu_A(x) d\mu_B(y) d\mu_C(z)  d\mu_A(x') d\mu_B(y') d\mu_C(z')\\
=\epsilon^{-1}\int G(x, y', z, x', y, z')d\mu_A(x) d\mu_B(y') d\mu_C(z)  d\mu_A(x') d\mu_B(y) d\mu_C(z').
\end{gather*}
Without loss of generality, we assume that $A$ is a compact subset of $[0, 1]$ and has positive distance from $0$. To see this, we let $A_+=A\cap [0, 1]$ and $A_-=A\cap [-1, 0]$. It is clear that one of them must have the same dimension as $A$. If it is $A_-$, then we will work with the polynomial $-x(y+z)$ instead of $x(y+z)$, which does not affect the Monge-Ampere determinant for our purpose. Thus, we can assume that $A\subset [0, 1]$. Note that from our assumption that $\dim_H A + \dim_H B + \dim_H C >2$, we have $\dim_H A > 0$. In other words, $A$ cannot be only supported on countable points. We now consider the restriction of $\mu_A$ into $[0, 1/2]$ and $[1/2, 1]$. If $\mu([1/2, 1])>0$, then we set $A:=[1/2, 1]$. Otherwise, we continue to consider the intervals $[0, 1/4]$ and $[1/4, 1/2]$. We repeat this process until there is an interval $[\frac{1}{2^N}, \frac{1}{2^{N-1}}]$ with positive measure. This holds because otherwise $\mu_A(A)=\lim_{N\to \infty}\mu_A \left([0, \frac{1}{2^N}]\right)=0$.

We now consider the maps $F_1, F_2$:
\[F_1\colon \mathbb{R}^3\to \mathbb{R}^3, ~(x, y', z)\to (x, y', x\cdot z), (A\times B\times C)\to U,\]
\[F_2\colon \mathbb{R}^3\to \mathbb{R}^3, ~(x', y, z')\to (y, x', x'\cdot z'), (A\times B\times C)\to V.\]
It is not hard to check that under our assumptions on $A, B, C$, these maps are bi-Lipschitz over $A\times B\times C$. 

We also have 
\begin{align*}
&\int G(x, y', z, x', y, z')d\mu_A(x) d\mu_B(y) d\mu_A(x) d\mu_B(y') d\mu_C(z)  d\mu_A(x') d\mu_B(y) d\mu_C(z')\\
&=\int G(F_1^{-1}(\mathbf{u}), F_2^{-1}(\mathbf{v}))d(F_1)_{*}\mu_A\times \mu_B\times \mu_C(\mathbf{u})d(F_2)_{*}\mu_A\times \mu_B\times \mu_C(\mathbf{v})\\
&=\int \chi_{\cbr{|\Psi(\mathbf{u}, \mathbf{v})|\le 2\epsilon}}d(F_1)_{*}\mu_A\times \mu_B\times \mu_C(\mathbf{u})d(F_2)_{*}\mu_A\times \mu_B\times \mu_C(\mathbf{v}).
\end{align*}

We now show that $(F_i)_*\mu_A\times \mu_B\times \mu_C$ are Frostman measures. 
Indeed,
\begin{align*}
&(F_i)_*\mu_A\times \mu_B\times \mu_C(B(\mathbf{x}, r))=\mu(F_i^{-1}B(\mathbf{x}, r))=\mu \left(  \left\lbrace  F_i^{-1}(\mathbf{x}')\colon |\mathbf{x}-\mathbf{x}'|\le r  \right\rbrace   \right).
\end{align*}

Therefore, we can use the bi-Lipschitz property of the maps $F_1$ and $F_2$ to imply that 
\begin{align*}
&(F_i)_*\mu_A\times \mu_B\times \mu_C(B(\mathbf{x}, r))=\mu_A\times _B\times\mu_C(F_i^{-1}B(\mathbf{x}, r))\\&=\mu_A\times \mu_B\times \mu_C \left(  \left\lbrace  F_i^{-1}(\mathbf{x}')\colon |\mathbf{x}-\mathbf{x}'|\le r  \right\rbrace   \right)\\
&\le \mu_A\times \mu_B\times \mu_C \left( B(F_i^{-1}(\mathbf{x}), cr)\right),
\end{align*}
for some uniform constant $c$. Since  $\mu_A\times \mu_B\times \mu_C$ is a Frostman measure, the measures $(F_i)_*\mu_A\times \mu_B\times \mu_C$ are also Frostman. 

To apply Theorem \ref{2.1.1}, we need to check if $\dim_H(U), \dim_H(V)>2$ and the curvature condition. 

The dimension conditions follow from the fact that $F_1$ and $F_2$ are bi-Lipschitz over $A\times B\times C$. For the curvature condition, we have


\begin{equation}\label{ewqq}\det \begin{pmatrix}
0&\nabla_\mathbf{u} \Psi\\
-(\nabla_\mathbf{v}\Psi)^T&\frac{\partial^2\Psi}{\partial u_i\partial v_j}
\end{pmatrix}=\det \begin{pmatrix}
0&v_1&-v_2&1\\
u_1&1&0&0\\
-u_2&0&-1&0\\
1&0&0&0
\end{pmatrix}    =1. \end{equation}
Thus, the Phong-Stein condition is satisfied. $\square$

\begin{remark}\label{AGR}
We remark that our results can be extended to a more general setting, precisely, define 
\[\Psi(\mathbf{u}, \mathbf{v})=\Phi(x, y, z)-\Phi(x', y', z'),\]
where $(\mathbf{u}, \mathbf{v})$ is a certain permutation of $(x, y, z, x', y', z')$ that may depend on whether $\Phi$ is symmetric such that 
\begin{enumerate}
\item $T_{\Psi_t}\colon L^2(\mathbb{R}^d)\to L^2_s(\mathbb{R}^d)$ with constants uniform in $t$ in a small neighborhood of $0$, 
\item The corresponding Hausdorff dimension to variables $\mathbf{u}$ and $\mathbf{v}$ is bigger than $3-s$, 
\end{enumerate}
then $|\Phi(A, B, C)|>0$. 

For example, we can choose $(\mathbf{u}, \mathbf{v})=(x, y', z, y, x', z')$, then Corollary \ref{thm1} (1) and (2) are recovered. In addition, if we assume a stronger assumption that $\dim_H(A) > \frac{2}{3}$, $\dim_H(B) > \frac{2}{3}$ and $\dim_H(C) > \frac{2}{3}$, then the choices of permutations are more flexible.
\end{remark}

\section{Proof of Theorem \ref{quadratic}}
Some of our ideas to prove Theorem \ref{quadratic} are motivated by the results in \cite{TPHAM} that are the discrete version of expanding polynomials in finite fields. While the key ingredient in the discrete version is the Rudnev point-plane incidence bound \cite{R} which is unavailable to our continuous setting. Instead, we will still use the Phong-Stein curvature condition to construct an appropriate map $\Psi$. Moreover due to the complexity of our general assumption on the polynomials $f$, it may happen that the Phong-Stein curvature condition is not satisfied on some subsets. However after carefully analysing these bad subsets, we are able to show that these bad sets can be removed from our underlying sets so that the remaining good subsets still occupy a large portion that makes the proofs still go through. We now turn to the proofs.  

Let $f(x, y, z)$ be a quadratic polynomial that is not of the form $g(h(x)+k(y)+l(z))$. Then Theorem \ref{quadratic} is a combination of the following two lemmas. 
\begin{lemma}\label{xqq91}
Let $f(x, y, z)=axy+bxz+r(x)+s(y)+t(z)$ with $r, s, t$ are polynomials in one variable of degree at most two. Suppose $a\ne 0$ and for compact sets $A, B, C\subset \mathbb{R}$ with $\dim_H(A)+\dim_H(B)+\dim_H(C)>2$, we have $|f(A, B, C)|>0$. 
\end{lemma}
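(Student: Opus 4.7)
The plan is to adapt the template of Corollary~\ref{thm1} (specifically Cases~1 and~2), with an additional ``bad-set removal'' step to handle a possible degeneracy of the Phong--Stein rotational curvature. Following Section~2, I would fix Frostman probability measures $\mu_A,\mu_B,\mu_C$ on $A,B,C$ with exponents $\alpha,\beta,\gamma$ satisfying $\alpha+\beta+\gamma>2$, let $\nu$ be the pushforward of $\mu_A\times\mu_B\times\mu_C$ under $f$, and reduce $|f(A,B,C)|>0$ to a bound of the form $\mu_U\times\mu_V(\{|\Psi(\mathbf{u},\mathbf{v})|\le 2\epsilon\})\lesssim \epsilon$. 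I would use the same permutation as in Cases~1 and~2 of Corollary~\ref{thm1}, namely $\mathbf{u}=(x,y',z)\in U:=A\times B\times C$ and $\mathbf{v}=(y,x',z')\in V:=B\times A\times C$, which produces
\[
\Psi(\mathbf{u},\mathbf{v}) = au_1 v_1 + bu_1 u_3 + r(u_1) + s(v_1) + t(u_3) - av_2 u_2 - bv_2 v_3 - r(v_2) - s(u_2) - t(v_3).
\]
A direct computation of the Monge--Amp\`ere determinant then yields
\[
\det\begin{pmatrix} 0 & \nabla_\mathbf{u}\Psi \\ -(\nabla_\mathbf{v}\Psi)^T & \dfrac{\partial^2 \Psi}{\partial u_i\partial v_j} \end{pmatrix} = a^2\bigl(bu_1+t'(u_3)\bigr)\bigl(bv_2+t'(v_3)\bigr).
\]

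Since $a\ne 0$, the Phong--Stein condition fails only on the lift of $Z_f:=\{(x,z)\in\mathbb{R}^2:bx+t'(z)=0\}$. Because $t$ has degree at most two, $bx+t'(z)$ is affine in $(x,z)$, and since $f$ depends on $z$ we have $bx+t'(z)\not\equiv 0$, so $Z_f$ is at most an affine line in $\mathbb{R}^2$. From $\alpha+\beta+\gamma>2$ together with each exponent being at most $1$, we get $\dim_H(A),\dim_H(C)>0$, so $\supp(\mu_A)$ and $\supp(\mu_C)$ are uncountable; hence $\supp(\mu_A)\times\supp(\mu_C)$ cannot be contained in a single line, and we can pick $(x_0,z_0)\in\supp(\mu_A)\times\supp(\mu_C)$ off $Z_f$. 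For $\eta>0$ small, set $A_0:=A\cap B(x_0,\eta)$ and $C_0:=C\cap B(z_0,\eta)$; by continuity there exists $\delta_1>0$ with $|bx+t'(z)|\ge\delta_1$ on $A_0\times C_0$. The restrictions $\mu_A|_{A_0},\mu_C|_{C_0}$ remain Frostman with exponents $\alpha,\gamma$, so $\dim_H(A_0)=\dim_H(A)$ and $\dim_H(C_0)=\dim_H(C)$, and thus $\dim_H(A_0)+\dim_H(B)+\dim_H(C_0)>2$. Applying Theorem~\ref{2.1.1} (in the product form of its remark) to $U_0:=A_0\times B\times C_0$ and $V_0:=B\times A_0\times C_0$ then gives the required $\epsilon$-bound, since on $U_0\times V_0$ the Monge--Amp\`ere is bounded below by $a^2\delta_1^2>0$ (yielding uniform Phong--Stein with $s=1$) and $\dim_H(U_0),\dim_H(V_0)>2=3-s$. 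Consequently $|f(A_0,B,C_0)|>0$, and since $f(A_0,B,C_0)\subset f(A,B,C)$ the lemma follows.

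The main obstacle is the bad-set removal. When $b=0$ or $\deg t\le 1$, the set $Z_f$ collapses to an empty set, a single point, or a vertical/horizontal line and is trivial to avoid. The delicate case is $b\ne 0$ with $t$ genuinely quadratic, where $Z_f$ is a slanted line in $(x,z)$-space; there one must combine the fact that $\supp(\mu_A)\times\supp(\mu_C)$ cannot be contained in that line (which would force $\supp(\mu_C)$ to be a single point, contradicting $\dim_H(C)>0$) with the Frostman property of $\mu_A,\mu_C$ to ensure that the restriction to small balls about $(x_0,z_0)$ preserves the Hausdorff dimensions of $A$ and $C$. This is precisely the ``careful analysis of bad subsets'' alluded to at the beginning of Section~\ref{quadratic}.
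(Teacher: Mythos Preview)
Your proof is correct and takes a genuinely different route from the paper's. You keep $U=A\times B\times C$, $V=B\times A\times C$, write $\Psi(\mathbf{u},\mathbf{v})=f(u_1,v_1,u_3)-f(v_2,u_2,v_3)$ directly, compute the Monge--Amp\`ere determinant $a^2\bigl(bu_1+t'(u_3)\bigr)\bigl(bv_2+t'(v_3)\bigr)$, and then localize $A$ and $C$ to small balls so as to avoid the vanishing locus $Z_f=\{bx+t'(z)=0\}$. The paper instead performs a nonlinear change of variables $F_1(x,y',z)=(x,\,y',\,bxz+r(x)+t(z)-s(y'))$ and $F_2(x',y,z')=(ay,\,ax',\,bx'z'+r(x')+t(z')-s(y))$, reducing to the fixed model $\Psi=u_1v_1-u_2v_2+u_3-v_3$ whose Monge--Amp\`ere determinant is constant; the price is that $F_1,F_2$ are at most two-to-one in the $z$-variable, so the paper partitions $A\times B\times C$ into two sheets on which each $F_i$ is bi-Lipschitz and sums the four resulting estimates. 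Since the Jacobian determinant of $F_1$ is exactly $bx+t'(z)$, the paper's bi-Lipschitz sheets are implicitly avoiding the very same set $Z_f$ that you remove explicitly; your version makes this step transparent and sidesteps the pushforward-Frostman bookkeeping for $(F_i)_*$. One minor imprecision in your write-up: from the Frostman bound for $\mu_A|_{A_0}$ you can conclude only $\dim_H(A_0)\ge\alpha$, not $\dim_H(A_0)=\dim_H(A)$; but this lower bound is all that Theorem~\ref{2.1.1} actually uses, so the argument goes through. Finally, both your argument and the paper's tacitly require that $f$ depend on $z$ (equivalently $bx+t'(z)\not\equiv 0$), a hypothesis inherited from Theorem~\ref{quadratic} though not restated in the lemma.
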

\begin{proof}
As in the proof of Corollary \ref{thm1} (3), we may assume that $A, B, C\subset (0, \infty)$ and have positive distance from $0$. The reason that if one of the sets $A, B, C$ is negative, we put a negative sign in the corresponding variables, and work with the set $ (-1)A \times (-1)B \times (-1)C$ depending on whether we put a negative sign in the variables. Thus the image of $f$ on $A \times B \times C$ is equal to the image of $f((-1)x, (-1)y, (-1)z)$ on $(-1)A \times (-1)B \times (-1)C$. In addition, our proofs only use the assumption that $\dim_H(A)+\dim_H(B)+\dim_H(C) > 2$. 

Set 
\[U=\left\lbrace  (x, y', bxz+r(x)+t(z)-s(y'))\colon (x, y', z)\in A\times B\times C  \right\rbrace,\]
and 
\[V=\left\lbrace  (ay, ax', bx'z'+r(x')+t(z')-s(y))\colon (x', y, z')\in A\times B\times C    \right\rbrace,\]
and define 
\[\Psi(\mathbf{u}, \mathbf{v})=u_1v_1-u_2v_2+u_3-v_3.\]

Let $F_1$ and $F_2$ be maps defined as follows
\[F_1\colon \mathbb{R}^3\to \mathbb{R}^3, ~(x, y', z)\to (x, y', bxz+r(x)+t(z)-s(y')),\]
\[F_2\colon \mathbb{R}^3\to \mathbb{R}^3, ~(x', y, z')\to (ay, ax', bx'z'+r(x')+t(z')-s(y)).\]

As in the proof of Corollary \ref{thm1} (3), we need to show that $\Psi$ satisfies the Phong-Stein condition and the following estimate

 \begin{equation}\label{eqx90}\int_{\RR} (\phi_{\epsilon} \ast \nu)^2(t) dt \lesssim \epsilon^{-1} \int \chi_{\cbr{|\Psi(\mathbf{u}, \mathbf{v})|\le 2\epsilon}}d(F_1)_{*}\mu_A\times \mu_B\times \mu_C(\mathbf{u})d(F_2)_{*}\mu_A\times \mu_B\times \mu_C(\mathbf{v})\lesssim 1.\end{equation}

As in the previous section, the function $\Psi$ satisfies the Phong-Stein condition. In the rest of the proof, we focus on showing the estimate (\ref{eqx90}). 

We note that $F_1$ might not be an injective map since for each $(x, y', u)\in F_1(A, B, C)$, the equation $bxz+r(x)+t(z)-s(y')=u$ can have two solutions. The same happens for $F_2$.

With $G(x, y', z, x', y, z')=\chi_{\cbr{\left|f(x, y, z)-f(x', y', z')\right| \leq 2\epsilon}}$, we have
\begin{gather*}
\int_{\RR} (\phi_{\epsilon} \ast \nu)^2(t) dt 
\\
\le \epsilon^{-1} \int  \chi_{\cbr{\left| f(x, y, z)- f(x', y', z')\right| \leq 2\epsilon}}
d\mu_A(x) d\mu_B(y) d\mu_C(z)  d\mu_A(x') d\mu_B(y') d\mu_C(z')\\
=\epsilon^{-1}\int G(x, y', z, x', y, z')d\mu_A(x) d\mu_B(y) d\mu_C(z)  d\mu_A(x') d\mu_B(y') d\mu_C(z'),\\
=\epsilon^{-1}\int G(x, y', z, x', y, z')d\mu_A(x) d\mu_B(y') d\mu_C(z)  d\mu_A(x') d\mu_B(y) d\mu_C(z').
\end{gather*}

We now partition the set $\{(x, y', z)\in A\times B\times C\}$ into two sets $X_1$ and $Y_1$ such that $F_1(X_1)=F_1(Y_1)=U$ and $F_1$ is bi-Lipschitz over each set. In other words, for each triple $(x, y', u)\in F_1(A, B, C)$, and if the equation $bxz+r(x)+t(z)-s(y')=u$ has two solutions $c_1, c_2$, then we assign $(x, y', c_1)$ to $X_1$ and $(x, y', c_2)$ to $Y_1$. If the equation only has one solution $c$, we assign $(x, y', c)$ to both sets $X_1$ and $Y_1$.
Note that $X_1$ and $Y_1$ do not need to be disjoint. Then we have $\dim_H(U)=\dim_H(X_1)=\dim_H(Y_1)$. 

On the other hand, we have either $\dim_H(X_1)$ or $\dim_H(Y_1)$ is the same as $\dim_H(A\times B\times C)$. This means that 
\[\dim_H(U)=\dim_H(X_1)=\dim_H(Y_1)=\dim_H(A\times B\times C).\]

We do the same for $\{(x', y, z')\in A\times B\times C\}=X_2\cup Y_2$. 

Therefore 
\begin{gather*}
\int G(x, y', z, x', y, z')d\mu_A(x) d\mu_B(y') d\mu_C(z)  d\mu_A(x') d\mu_B(y) d\mu_C(z')\\
\le \sum_{i, j}\int G(x, y', z, x', y, z')d\mu_A\times \mu_B\times \mu_C{\big|}_{X_i}(x, y', z)d\mu_A\times \mu_B\times \mu_C{\big|}_{Y_i}(x', y, z')\\
=\sum_{i, j}\int G(F_1^{-1}(\mathbf{u}), F_2^{-1}(\mathbf{v}))d(F_1)_{*}\mu_A\times \mu_B\times \mu_C{\big|}_{X_i}(\mathbf{u})d(F_2)_{*}\mu_A\times \mu_B\times \mu_C{\big|}_{Y_j}(\mathbf{v})\\
=\sum_{i, j}\int \chi_{\cbr{|\Psi(\mathbf{u}, \mathbf{v})|\le 2\epsilon}}d(F_1)_{*}\mu_A\times \mu_B\times \mu_C{\big|}_{X_i}(\mathbf{u})d(F_2)_{*}\mu_A\times \mu_B\times \mu_C{\big|}_{Y_j}(\mathbf{v})\lesssim \epsilon,
\end{gather*}
where we applied Theorem \ref{2.1.1} with the facts that $d(F_i)_{*}\mu_A\times \mu_B\times \mu_C{\big|}_{X_j}$ and $d(F_i)_{*}\mu_A\times \mu_B\times \mu_C{\big|}_{Y_j}$ are Frostman measures for $1\le i, j\le 2$. 
This completes the proof of the lemma.
\end{proof}

\begin{lemma}
Let $f(x, y, z)=axy+bxz+cyz+r(x)+s(y)+t(z)$ with $r, s, t$ polynomials in one variable of degree at most two. Suppose $a, b, c\ne 0$ and $f$ is not of the form $g(h(x)+k(y)+l(z))$. For compact sets $A, B, C\subset \mathbb{R}$ with $\dim_H(A)+\dim_H(B)+\dim_H(C)>2$, then $|f(A, B, C)|>0$. 
\end{lemma}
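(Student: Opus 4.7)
The plan is to adapt Lemma~\ref{xqq91} to the case where all three coupling monomials $xy, xz, yz$ appear. The structural obstruction is that by pigeonhole, in any 2-group permutation of $(x, y, z, x', y', z')$ into $(\mathbf{u}, \mathbf{v}) \in \mathbb{R}^3 \times \mathbb{R}^3$, at least two of the three bilinears place both factors in the same group, so one cannot encode $f(x,y,z) - f(x',y',z')$ by the clean bilinear $\Psi = u_1 v_1 - u_2 v_2 + u_3 - v_3$ used before. I will keep $F_1, F_2$ as plain coordinate permutations and push all the complexity into a richer $\Psi$; the Phong-Stein determinant is then no longer constant, and the new work lies in identifying the subset of $U \times V$ on which it vanishes and removing it.

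Concretely I will try two permutations, chosen so that $\Psi$ does not decouple as a pure $\mathbf{u}$-function minus a pure $\mathbf{v}$-function: Partition~1 with $\mathbf{u} = (x, y', z')$, $\mathbf{v} = (x', y, z)$, for which $\Psi = f(u_1, v_2, v_3) - f(v_1, u_2, u_3)$; and Partition~3 with $\mathbf{u} = (x, y, z')$, $\mathbf{v} = (x', y', z)$, for which $\Psi = f(u_1, u_2, v_3) - f(v_1, v_2, u_3)$. A direct Monge--Amp\`ere expansion analogous to Case~1 of Corollary~\ref{thm1} should yield the factorizations
\begin{align*}
\det(M_1) &= -G_1(y,z)\, G_1(y',z'), \quad G_1(y,z) = (ac-2bs_2)y + (2at_2-bc)z + (at_1-bs_1), \\
\det(M_3) &= \phantom{-} G_3(x,y)\, G_3(x',y'), \quad G_3(x,y) = (2cr_2-ab)x + (ac-2bs_2)y + (cr_1-bs_1),
\end{align*}
where $r_i, s_i, t_i$ denote the coefficients of $r, s, t$. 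A short calculation shows that $G_1 \equiv 0$ is equivalent to $s_2 = \frac{ac}{2b}$, $t_2 = \frac{bc}{2a}$, $at_1 = bs_1$, and $G_3 \equiv 0$ to $r_2 = \frac{ab}{2c}$, $s_2 = \frac{ac}{2b}$, $cr_1 = bs_1$. Jointly these force all three quadratic relations $r_2 = \frac{ab}{2c}$, $s_2 = \frac{ac}{2b}$, $t_2 = \frac{bc}{2a}$ and all three corresponding linear ratio relations (the third linear relation follows from the first two), which by a direct expansion is exactly the condition for $f$ to take the form $g(h(x) + k(y) + l(z))$ with $g$ quadratic and $h, k, l$ linear. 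Since this is excluded by hypothesis, at least one of $G_1, G_3$ is not identically zero, and I will use the corresponding partition.

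Assume without loss of generality that $G_1 \not\equiv 0$. Then $\{G_1(y, z) = 0\}$ is at most a line in $\mathbb{R}^2$, and its intersection with $B \times C$ has Hausdorff dimension at most $\max(\dim_H B, \dim_H C)$ (with the sharper bound $\min(\dim_H B, \dim_H C)$ away from the horizontal/vertical degenerate cases). Setting $V_{\mathrm{bad}} = A \times (\{G_1 = 0\} \cap (B \times C)) \subset V$ and defining $U_{\mathrm{bad}} \subset U$ analogously, the hypothesis $\dim_H A + \dim_H B + \dim_H C > 2$ combined with $\dim_H \le 1$ in $\mathbb{R}$ forces each of $\dim_H A, \dim_H B, \dim_H C > 0$, so $\max(\dim_H B, \dim_H C) < \dim_H B + \dim_H C$ strictly. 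Hence $\dim_H V_{\mathrm{bad}} < \dim_H V$ and $\dim_H U_{\mathrm{bad}} < \dim_H U$; a Frostman measure of exponent $s$ in the non-empty interval $(\max(2, \dim_H V_{\mathrm{bad}}), \dim_H V)$ then assigns zero mass to $V_{\mathrm{bad}}$, and similarly on $U$.

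Finally, to upgrade pointwise non-vanishing of the Phong-Stein determinant on $(U \setminus U_{\mathrm{bad}}) \times (V \setminus V_{\mathrm{bad}})$ to the uniform $L^2 \to L^2_1$ bound required by Theorem~\ref{2.1.1}, I will decompose the good set into sub-level pieces $\{|G_1| \ge \delta\}$, apply the Phong-Stein Radon-transform bound on each with constants uniform in a small neighborhood of $t = 0$, and use that the Frostman mass of $\{|G_1| < \delta\}$ tends to $0$ as $\delta \to 0$. Combined with the $E \times F$ version of Theorem~\ref{2.1.1} stated in the remark, this yields $\mu_U \times \mu_V\{|\Psi| \le \epsilon\} \lesssim \epsilon$, and the calculation from Section~2 then converts this into $\int (\phi_\epsilon * \nu)^2 \lesssim 1$, giving $|f(A, B, C)| > 0$. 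The main obstacle is exactly this bad-set removal: one must simultaneously exploit the algebraic non-degeneracy of $f$ (to get some $G_i \not\equiv 0$) and the Hausdorff-dimension hypothesis (to make the bad set genuinely smaller-dimensional than $U$ and $V$), and then verify that the Phong-Stein bound survives the restriction.
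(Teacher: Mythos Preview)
Your approach is correct and genuinely different from the paper's. The paper keeps the \emph{same} bilinear form $\Psi(\mathbf{u},\mathbf{v})=u_1v_1-u_2v_2+u_3-v_3$ used in Lemma~\ref{xqq91} and absorbs all the complexity into nonlinear maps
\[
F_1:(x,y',z')\mapsto\bigl(x,\,ay'+bz',\,dx^2-e(y')^2-cy'z'-g(z')^2+hx-iy'-jz'\bigr)
\]
and the analogous $F_2$; the Monge--Amp\`ere determinant is then identically $1$, but the price is that $F_1,F_2$ are generically $2$-to-$1$, and the paper removes a line $S\subset B\times C$ to force bounded multiplicity and then partitions the domain into two bi-Lipschitz pieces $X_i,Y_i$. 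You do the opposite: keep $F_i$ as coordinate permutations and let $\Psi=f(u_1,v_2,v_3)-f(v_1,u_2,u_3)$ carry the structure, so the determinant becomes variable and factors as $-G_1(y,z)G_1(y',z')$. Interestingly the two ``bad lines'' coincide up to relabeling: the paper's analysis of the fibres of $F_1$ leads to the same affine condition on $(y,z)$ as your $G_1=0$. Your route avoids the fiber-partition bookkeeping at the cost of a varying curvature; the paper's route avoids the curvature cutoff at the cost of the $X_i/Y_i$ decomposition. Both are equally valid.

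One small point: your final paragraph (``decompose into sub-level pieces $\{|G_1|\ge\delta\}$ \ldots\ and let $\delta\to0$'') is slightly off. Summing or limiting over $\delta$ runs into the problem that the Phong--Stein constant $C(\delta)$ blows up while the residual mass $\mu(\{|G_1|<\delta\})$ only tends to zero, and there is no reason the two balance. The clean fix is simpler than what you wrote: since the product Frostman measure $\mu_B\times\mu_C$ is non-atomic, the line $\{G_1=0\}$ has $\mu_B\times\mu_C$-measure zero, so you can pick a \emph{single} $\delta_0>0$ with $(\mu_B\times\mu_C)(\{|G_1|\ge\delta_0\})>0$, restrict $B\times C$ to that set once and for all, and run the Section~2 argument for the restricted pushforward $\nu_{\delta_0}$. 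On this restriction the Monge--Amp\`ere determinant is bounded below by $\delta_0^2$, so Phong--Stein gives a genuinely uniform $L^2\to L^2_1$ bound, Theorem~\ref{2.1.1} applies, and $\nu_{\delta_0}\in L^2$ forces $|f(A,B,C)|>0$. No limit is needed.
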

\begin{proof}
As in the previous lemma, we can assume that $A, B, C\subset (0, \infty)$, and have positive distance from $0$.

We first write $f$ as 
\[f(x, y, z)=axy+bxz+cyz+dx^2+ey^2+gz^2+hx+iy+jz,\]
where $a,b,c\ne 0$ and $d,e,g,h, i, j\in \mathbb{R}.$ 
Define
\[U=\left\lbrace (x, ay'+bz', dx^2-e(y')^2-cy'z'-g(z')^2+hx-iy'-jz')\colon x\in A, y'\in B, z'\in C  \right\rbrace,\]
\[V=\left\lbrace (ay+bz, x', d(x')^2-ey^2-cyz-gz^2+hx'-iy-jz)\colon x'\in A, y\in B, z\in C       \right\rbrace,\]
and 
\[\Psi(\mathbf{u}, \mathbf{v})=u_1v_1-u_2v_2+u_3-v_3.\]


We now adapt a combinatorial argument from \cite[Lemma 2.3]{TPHAM}.

From our assumption, we know that $f$ is not of the form $g(h(x)+k(y)+l(z))$, thus at least one of the following equations does not hold
\begin{equation*}
4de=a^2,~ 4dg =b^2,~ 4eg =c^2,~  hc=ja=ib.
\end{equation*}
If not, one can write
$$f =\left(\sqrt{d} x + \sqrt{e}y + \sqrt{g}z + \frac{h}{2\sqrt{d}}\right)^2 -\frac{h^2}{4d},$$ 
if $d, e, g$ are all squares in $\mathbb{R}$. If  all of $d,e,g$ are not squares in $\mathbb{R}$, the polynomial $f$ can be presented as
$$f =\frac{1}{deg} \left( d\sqrt{eg} x+ e\sqrt{dg}y + g\sqrt{de}z + \frac{h\sqrt{eg}}{2}\right)^2-\frac{h^2}{4d},$$
since $de, dg, eg$ are squares in  $\mathbb{R},$ and $e, d, g\ne 0$. 

By permuting the variables if necessary, in the rest of the proof, we suppose that one of the following equations is not satisfied
\begin{equation*} ib=ja, ~4eg=c^2 \end{equation*}

To study the dimensions of  $U$ and $V$, let $F_1$ and $F_2$ be maps defined as follows
\[F_1\colon \mathbb{R}^3\to \mathbb{R}^3, ~(x, y', z')\to (x, ay'+bz', dx^2-e(y')^2-cy'z'-g(z')^2+hx-iy'-jz'),\]

\[F_2\colon \mathbb{R}^3\to \mathbb{R}^3, ~(x', y, z)\to (ay+bz, x', d(x')^2-ey^2-cyz-gz^2+hx'-iy-jz).\]

The argument is the same for both $F_1$ and $F_2$, so we only prove it for $F_1$. 

We now fall into two cases:

{\bf Case $1$:} Assume that $bc-2ag\ne 0$. Define
\[S=\left\lbrace (y, z)\colon  y\in B, z\in C, ~ay+bz=-\frac{ib^2-jab}{bc-2ag}\right\rbrace.\]
For $(u, v, w)\in F_1(A, B, C)$, we consider the following equations
\[u=x,~~~ v = ay'+bz', ~~~w = dx^2 - e (y')^2-cy'z'- g(z')^2 +hx-iy'-jz'.\]
This implies
\[w = d u^2 - e (y')^2 -cy' \left(\frac{v-ay'}{b}\right) - g\left(\frac{v-ay'}{b}\right)^2 
+hu-iy'-j\left(\frac{v-ay'}{b}\right), \]
or
\[\left(b^2e-abc+a^2g\right)(y')^2  +\left(bcv - 2agv +ib^2 - jab\right)y' +\left(b^2w - b^2d u^2+g v^2 -b^2hu +bjv\right)= 0.\]

If either $b^2e-abc+a^2g$ or $bcv-2agv+ib^2-jab$ is non-zero, then there are at most two solutions for $y'$, and $z'$ is determined uniquely in terms of $y'$.

If both $b^2e-abc+a^2g$ and $bcv-2agv+ib^2-jab$ are zero, then we obtain 
\begin{equation}\label{0911}
b^2e-abc+a^2g=0, ~~~
(bc - 2ag)v +(ib - ja)b=0,~~~
b^2w - b^2d u^2+g v^2 -b^2hu +bjv=0.\end{equation}
Since $bc-2ag\ne 0$, this implies that
\[  v = -(ib^2-jab)/(bc-2ag). \]
We can avoid this case by removing $S$ from $B\times C$. 

It is enough to show that $f\bigg( A\times (B\times C\setminus S) \bigg)$ is of positive Lebesgue measure. 

Since $S$ can be a line or an empty-set, so the dimension of $B\times C\setminus S$ is the same as the dimension of $B\times C$, which is greater than $1$. 
 
To study $f\bigg( A\times (B\times C\setminus S) \bigg)$, we need to modify the definitions of $U$ and $V$, namely, we define
\[U'=\left\lbrace (x, ay'+bz', dx^2-e(y')^2-cy'z'-g(z')^2+hx-iy'-jz')\colon x\in A, (y', z')\in B\times C\setminus \mathcal{S}  \right\rbrace,\]
\[V'=\left\lbrace (ay+bz, x', d(x')^2-ey^2-cyz-gz^2+hx'-iy-jz)\colon x'\in A, (y, z)\in B\times C\setminus \mathcal{S} \right\rbrace.\]

With $G(x, y', z', x', y, z)=\chi_{\cbr{\left|f(x, y, z)-f(x', y', z')\right| \leq 2\epsilon}}$, we have
\begin{gather*}
\int_{\RR} (\phi_{\epsilon} \ast \nu)^2(t) dt 
\\
\le \epsilon^{-1} \int  \chi_{\cbr{\left|f(x, y, z)-f(x', y', z')\right| \leq 2\epsilon}}
d\mu_A(x) d\mu_{B\times C\setminus S}(y, z)   d\mu_A(x') d\mu_{B\times C\setminus S}(y', z')\\
=\epsilon^{-1}\int G(x, y', z', x', y, z)d\mu_A(x) d\mu_{B\times C\setminus S}(y', z')   d\mu_A(x') d\mu_{B\times C\setminus S}(y, z).\\
\end{gather*}
We now partition the set $\{(x, y, z)\in A\times (B\times C\setminus S)\}$ into two sets $X_1$ and $Y_1$ such that $F_1(X_1)=F_1(Y_1)=U'$ and $F_1$ is bi-Lipschitz over each set. Note that $X_1$ and $Y_1$ do not need to be disjoint. Then we have $\dim_H(U')=\dim_H(X_1)=\dim_H(Y_1)$. 

On the other hand, we have either $\dim_H(X_1)$ or $\dim_H(Y_1)$ is the same as $\dim_H(A\times (B\times C\setminus S))$. This means that 
\[\dim_H(U')=\dim_H(X_1)=\dim_H(Y_1)=\dim_H(A\times (B\times C\setminus S)).\]

We do the same for $\{(x', y, z)\in A\times (B\times C\setminus S)\}=X_2\cup Y_2$. 

So  
\begin{gather*}
\int G(x, y', z', x', y, z)d\mu_A(x) d\mu_{B\times C\setminus S}(y', z') d\mu_A(x') d\mu_{B\times C\setminus S}(y, z) \\
\le \sum_{i, j}\int G(x, y', z', x', y, z)d\mu_A\times d\mu_{B\times C\setminus S}{\big|}_{X_i}(x, y', z')d\mu_A\times d\mu_{B\times C\setminus S}{\big|}_{Y_i}(x', y, z)\\
=\sum_{i, j}\int G(F_1^{-1}(\mathbf{u}), F_2^{-1}(\mathbf{v}))d(F_1)_{*}\mu_A\times \mu_{B\times C\setminus S}{\big|}_{X_i}(\mathbf{u})d(F_2)_{*}\mu_A\times \mu_{B\times C\setminus S}{\big|}_{Y_j}(\mathbf{v})\\
=\sum_{i, j}\int \chi_{\cbr{|\Psi(\mathbf{u}, \mathbf{v})|\le 2\epsilon}}d(F_1)_{*}\mu_A\times \mu_{B\times C\setminus S}{\big|}_{X_i}(\mathbf{u})d(F_2)_{*}\mu_A\times \mu_{B\times C\setminus S}{\big|}_{Y_j}(\mathbf{v})\lesssim \epsilon,
\end{gather*}
where we applied Theorem \ref{2.1.1} with the facts that $d(F_i)_{*}\mu_A\times \mu_{B\times C\setminus S}{\big|}_{X_j}$ and $d(F_i)_{*}\mu_A\times \mu_{B\times C\setminus S}{\big|}_{Y_j}$ are Frostman measures for $1\le i, j\le 2$.

In other words, $F_1$ is bijective on the set $A\times (B\times C\setminus S)$. 

{\bf Case $2$:} Assume that $bc-2ag=0$. There is only a difference compared to the previous when both $b^2e-abc+a^2g$ and $bcv-2agv+ib^2-jab$ are zero. Then, we obtain 
\begin{equation}\label{0911}
b^2e-abc+a^2g=0, ~~~
(bc - 2ag)v +(ib - ja)b=0,~~~
b^2w - b^2d u^2+g v^2 -b^2hu +bjv=0.\end{equation}
From these equations, one can check that $ib=ja$ and $4eg=c^2$. This contradicts to our assumption. 

Thus, in this case, we do not have to remove $S$ from $A\times B\times C$. The rest of the proof is the same as above. 
\end{proof}

\section{Proof of Corollary \ref{distance}}
To prove Corollary \ref{distance}, we first recall the following result from Liu in \cite{liu}.

\begin{lemma}[\cite{liu}]\label{lm2'}
For $E\subset \mathbb{R}^2$ with $\dim_H(E)>1$, then we have 
\[\dim_H(\Delta(E))\ge \min \left\{\frac{4}{3}\dim_H(E)-\frac{2}{3}, ~1 \right\}.\]
\end{lemma}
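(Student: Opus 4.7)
The strategy is to use the $L^2$-averaging technique on pinned distance measures, following the framework pioneered by Mattila and refined by Liu in the plane. Let $s = \dim_H(E) > 1$ and set $\alpha = \min\{\tfrac{4s-2}{3}, 1\}$. Since the pinned distance set $\Delta_{x_0}(E) := \{|x_0 - y| : y \in E\}$ is contained in $\Delta(E)$ for any $x_0 \in E$, it suffices to exhibit a single pin $x_0 \in E$ with $\dim_H(\Delta_{x_0}(E)) \geq \alpha$. Fix a Frostman probability measure $\mu$ on a compact subset of $E$ with exponent $s' < s$ arbitrarily close to $s$, and let $\nu_{x_0}$ denote the push-forward $(y \mapsto |x_0-y|)_* \mu$, which is supported on $\Delta_{x_0}(E)$. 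By Frostman's lemma the desired dimension bound will follow once we show that the $\beta$-energy $I_\beta(\nu_{x_0})$ is finite for some $x_0$ and every $\beta < \alpha$.

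To secure such a pin I would average the energy over $x_0$ against $\mu$ and unfold the definition of $\nu_{x_0}$ to obtain
\[
\int_E I_\beta(\nu_{x_0}) \, d\mu(x_0) = \iiint \big||x_0-y| - |x_0-y'|\big|^{-\beta} \, d\mu(y)\,d\mu(y')\,d\mu(x_0).
\]
For fixed $(y,y')$ the inner integral is controlled by the $L^2 \to L^2_{1/2}$ mapping property of the Radon-type averaging operator attached to the phase $\Psi_{y,y'}(x_0) = |x_0-y| - |x_0-y'|$; one checks directly that $\Psi_{y,y'}$ satisfies the Phong-Stein rotational curvature condition away from the degenerate ray where $x_0$, $y$, $y'$ are collinear with $x_0$ on the same side of both. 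In the spirit of Theorem \ref{2.1.1}, this Sobolev gain converts into a quantitative bound on the $\mu$-measure of a thin annular neighborhood of each level set $\{\Psi_{y,y'} = t\}$, which upon integrating against $|t|^{-\beta}\,dt$ produces the desired triple-integral estimate.

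The main obstacle is twofold. First, the degenerate collinear locus must be excised; a neighborhood of it has $\mu \times \mu \times \mu$-measure controllable through the Frostman condition on $\mu$, giving an admissible error. Second, and more substantially, the raw $L^2$ bound on its own only yields $\dim_H(\Delta_{x_0}(E)) \geq 2s-2$, which is strictly weaker than $\tfrac{4s-2}{3}$ for $s \in (1, 5/4)$. To recover the sharp $4/3$ factor I would perform a dyadic decomposition of $\mu$ according to the scales $|x_0-y|$, $|x_0-y'|$ and $|y-y'|$, interpolate the $L^2$-gain on each shell with the trivial $L^\infty$ estimate, and then optimize over the interpolation parameter. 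The exponent $\tfrac{4s-2}{3}$ emerges precisely from balancing the Frostman mass on each dyadic shell against the Sobolev gain of $1/2$. Letting $s' \to s$ and $\beta \to \alpha$ completes the proof.
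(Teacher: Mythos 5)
You should first note that the paper does not prove this lemma at all: it is quoted verbatim from Liu's paper \cite{liu} and used as a black box in the proof of Corollary \ref{distance}, so there is no internal proof to compare against; what you have written must therefore stand or fall as a reconstruction of Liu's theorem itself.

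Your framework (pin the distance set, push $\mu$ forward to $\nu_{x_0}$, average the $\beta$-energy of $\nu_{x_0}$ over the pin $x_0$) is indeed the right general shape and is in the spirit of Liu's ``$L^2$-method,'' but the proposal has a genuine gap exactly where the theorem's content lies: the passage from the bound $2s-2$ to $\tfrac{4}{3}s-\tfrac{2}{3}$. Your suggested fix --- dyadically decompose in the scales $|x_0-y|$, $|x_0-y'|$, $|y-y'|$ and interpolate the $L^2\to L^2_{1/2}$ Sobolev gain against trivial bounds and Frostman mass --- cannot produce the $4/3$ numerology. Balancing a fixed $1/2$-derivative gain against Frostman estimates is precisely Mattila's classical scheme and yields thresholds of the type $s>3/2$ for positive measure and dimension bounds like $2s-2$ or $s-\tfrac12$; no optimization over such an interpolation recovers $\tfrac{4s-2}{3}$. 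Liu's actual argument runs through Fourier analysis: an identity (coming from his group-action/Mattila-integral framework) expressing averaged $L^2$-norms, respectively weighted energies, of the pinned distance measures in terms of the spherical averages $\sigma_\mu(r)=\int_{S^1}|\widehat{\mu}(r\omega)|^2\,d\omega$, combined with Wolff's sharp decay estimate $\sigma_\mu(r)\lesssim_{\epsilon}r^{\epsilon-s/2}$ for measures of finite $s$-energy in the plane. That estimate is a deep theorem in its own right (proved via circle-tangency/incidence or local-smoothing techniques) and is the sole source of the exponent $\tfrac{4}{3}\dim_H(E)-\tfrac23$; it is not derivable from the Phong--Stein $L^2\to L^2_{1/2}$ bound you invoke. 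A secondary, smaller issue is that your curvature setup is not equidimensional: $\Psi_{y,y'}(x_0)=|x_0-y|-|x_0-y'|$ is a phase on $\mathbb{R}^2\times\mathbb{R}^4$, so Theorem \ref{2.1.1} (and the uniform bound $\lesssim\epsilon$ on $\epsilon$-neighborhoods, which anyway requires more dimension than the hypothesis $\dim_H(E)>1$ provides) does not apply as stated. As written, the proof does not go through; to make it honest you would either import Wolff's spherical means estimate explicitly or simply cite Liu, as the paper does.
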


\begin{proof}[Proof of Corollary \ref{distance}]
Set $A=B$ and $C=\Delta(A^2)^2$. It has been proved in \cite[Lemma 2.3]{KPS} that $\dim_H(C)=\dim_H(\Delta(A^2))$. Using Lemma \ref{lm2'} and Corollary \ref{thm1} (1), we can conclude that $|\Delta(A^3)^2|>0$ whenever $\dim_H(A)>4/7$. This implies that $|\Delta(A^3)|>0$ under $\dim_H(A)>4/7$.
\end{proof}

Finally, it is also worth to mention that under the assumptions in our Theorem 1.2, it seems not possible to show that the image of $f(x,y,z)$ contains an interval. Although we are not aware of any example in the continuous setting, a result of Bienvenu, Hennecart and Shkredov in the setting of finite fields $\mathbb F_p$ \cite{BHS} shows that if we take $f(x,y,z)=x(y+z)$, then there exists a set $A \subset \mathbb F_p$ with $|A| > cp$ such that $A(A+A)$ does not contain the whole field $\mathbb F_p$.

\section*{Acknowledgments}
The authors would like to thank Prof. Allan Greenleaf for informing us about a mistake in the statement of Theorem 1.2 in the earlier version.  Theorem 1.2 from that version is now replaced by Remark \ref{AGR} of this paper. 

Doowon Koh was supported by the National Research Foundation of Korea (NRF) grant funded by the Korea government (MIST) (No. NRF-2018R1D1A1B07044469). Thang Pham was supported by Swiss National Science Foundation grant P4P4P2-191067. Chun-Yen Shen was supported in part by MOST, through grant 108-2628-M-002-010-MY4.

 \bibliographystyle{amsplain}

\begin{thebibliography}{10}


\bibitem{bg1}
J. Bourgain, \textit{On the Erd\H{o}s-Volkmann and Katz-Tao ring conjectures}, Geom. Funct. Anal., \textbf{13}(2):334--365, 2003.
\bibitem{bg2}
J. Bourgain, \textit{The discretized sum-product and projection theorems}, J. Anal. Math., \textbf{112} (2010)193--236.

\bibitem{BHS}
 P. Bienvenu, F. Hennecart and I. Shkredov, \textit{A note on the set $A(A+A)$}, Moscow J. Comb. Number Th. 8 (2019) 179-188.




\bibitem{chap}
J. Chapman, M. B. Erdogan, D. Hart, A. Iosevich, D. Koh, \textit{Pinned distance sets, $k$-simplices, Wolff’s exponent in finite fields and sum-product estimates}, Math. Z., \textbf{271}(1-2):63--93, 2012.
\bibitem{Du2}
 X. Du,  R. Zhang, \textit{Sharp $L^2$ estimates of the Schr\"{o}dinger maximal function in higher dimensions}, Ann. of Math. \textbf{189} (2019), no. 3, 837--861.
\bibitem{DIOZ}
X. Du, A. Iosevich, Y. Ou, H. Wang, R. Zhang, \textit{An improved result for Falconer’s distance set problem in even dimensions}, Mathematische Annalen (2021): 1-17.
\bibitem{iosevich}
S. Eswarathasan, A. Iosevich, K. Taylor, \textit{Fourier integral operators, fractal sets, and the regular value theorem}, Advances in Mathematics, \textbf{228}(4) (2011): 2385--2402.


\bibitem{guth}
L.  Guth,  A.  Iosevich,  Y.  Ou,  H.  Wang, \textit{On  Falconer’s  distance  set  problem  in  the  plane}, Invent. math. (2019). https://doi.org/10.1007/s00222-019-00917-x.
\bibitem{td-1}
L. Grafakos, A. Greenleaf, A. Iosevich, E. Palsson, \textit{Multilinear generalized Radon transforms and point configurations}, Forum Mathematicum, vol. \textbf{27}, no. 4, pp. 2323--2360. De Gruyter, 2015.
\bibitem{cite1}
A. Greenleaf, A. Iosevich, K. Taylor, \textit{Configuration sets with nonempty interior}, Jour. Geometric Analysis, https://doi.org/10.1007/s12220- 019-00288-y (electronic); 
\bibitem{cite2}
A. Greenleaf, A. Iosevich and K. Taylor, \textit{On k-point configuration sets with nonempty interior}, https://arxiv.org/abs/2005.10796. 
\bibitem{td0}
A. Greenleaf, A. Iosevich, B. Liu, E. Palsson, \textit{A group-theoretic viewpoint on Erd\H{o}s–Falconer problems and the Mattila integral}, Revista Matemática Iberoamericana, \textbf{31}(3) (2015): 799--810.






\bibitem{ha2}
D. Hart, A. Iosevich, \textit{Sums and products in finite fields: an integral geometric viewpoint}. In
Radon Transforms, Geometry, and Wavelets, AMS Contemporary Mathematics 464, pages 129--136.
AMS RI, 2008.






\bibitem{iosevich3}
A. Iosevich, B. Liu, Y. Xi, \textit{Microlocal decoupling inequalities and the distance problem on Riemannian manifolds}, accepted in American Journal of Mathematics (2021).
\bibitem{io}
A. Iosevich, M. Rudnev, \textit{Erd\H{o}s distance problem in vector spaces over finite fields}, Trans. Am. Math.
Soc. \textbf{359} (2007), 6127-6142.
\bibitem{Alexliu}
A. Iosevich, B. Liu, \textit{Falconer distance problem, additive energy and Cartesian products}, Annales Academiæ Scientiarum FennicæMathematica, Volumen \textbf{41}, 2016, 579--585.

\bibitem{td1}
A. Iosevich, B. Liu, \textit{Equilateral triangles in subsets of $\mathbb{R}^d$ of large Hausdorff dimension}, Israel Journal of Mathematics, \textbf{231}(1) (2019): 123--137.
\bibitem{td2}
A. Iosevich, B. Liu, \textit{Pinned distance problem, slicing measures, and local smoothing estimates}, Transactions of the American Mathematical Society, \textbf{371}(6) (2019): 4459--4474.




\bibitem{KPS}
D. Koh, T. Pham, C-Y. Shen, \textit{On the Mattila-Sjolin distance theorem for product sets}, arXiv:2103.11418 (2021).
\bibitem{KMPV}
D. Koh, H. Mojarrad, T. Pham, C. Valculescu, \textit{Four-variable expanders over the prime fields}, Proceedings of the American Mathematical Society, \textbf{146}(12) (2018): 5025--5034.
\bibitem{mup1}
B. Murphy, G. Petridis, T. Pham, M. Rudnev, S. Stevens,\textit{On the pinned distances problemover finite fields}, to appear in Journal of London Mathematical Society, arXiv:2003.00510(2021).











\bibitem{mu}
B. Murphy, G. Petridis, \textit{Products of Differences over Arbitrary Finite Fields}, Discrete Analysis (2018): 5098.



\bibitem{liu2}
B. Liu, \textit{Group actions, the Mattila integral and applications}, Proc. Amer. Math. Soc. \textbf{147} (2019), no.6, 2503--2516.
\bibitem{liu}
B. Liu, \textit{Hausdorff dimension of pinned distance sets and the $L^2$-method,} Proceedings of the American Mathematical Society, \textbf{148}(1) (2020), 333--341.





\bibitem{o}
T. Orponen, \textit{On arithmetic sums of Ahlfors-regular sets}, arXiv:2104.07514 (2021).

\bibitem{phong}
D. H. Phong, E. M. Stein, \textit{Hilbert  integrals,  singular  integrals,  and  Radon  transforms},  I, Acta Math. \textbf{157}(1986), no. 1-2, 99--157. 



\bibitem{TPHAM}
T. Pham, L. A. Vinh, F. de Zeeuw, \textit{Three-variable expanding polynomials and higher-dimensional distinct distances}, Combinatorica, \textbf{39}(2) (2019): 411--426.

\bibitem{R}
M. Rudnev,
{\em On the number of incidences between points and planes in three dimensions}, 
Combinatorica,  \textbf{38}  (2018),  no. 1, 219--254.
\bibitem{sak}
A. S\'{a}rk\"{o}zy, \textit{On sums and products of residues modulo $p$},  Acta Arith., \textbf{118}(4):403-409, 2005.


\bibitem{SS}
J. Schmeling and P. Shmerkin, \textit{On the Dimension of Iterated Sumsets},  Recent Developments in Fractals and Related Fields, 2010. 55--72.



\bibitem{shpas}
I. E. Shparlinski, \textit{On the solvability of bilinear equations in finite fields}, Glasg. Math. J., \textbf{50}(3):523–
529, 2008.




\bibitem{GMY}
 C. G. T. DE A. Moreira, J.-C. Yoccoz,  \textit{Stable Intersections of Regular Cantor Sets with Large Hausdorff Dimensions}, Ann of Math, Vol. \textbf{154}, No. 1, 2008.


\bibitem{tao}
T. Tao, \textit{Expanding polynomials over finite fields of large characteristic, and a regularity lemma for definable sets}, Contributions to Discrete Mathematics, \textbf{10}(1) (2015).
\bibitem{VAN}
D. N. Van Anh, D. Koh, M. Mirzaei, H. Mojarrad, T. Pham, \textit{Moderate expanders over rings}, Journal of Number Theory, \textbf{218} (2021): 223--233.


\bibitem{vinh}
L. A. Vinh, \textit{On four-variable expanders in finite fields}, SIAM Journal on Discrete Mathematics,
\textbf{27}(4)(2013): 2038-{2048}.
\end{thebibliography}

\end{document}